\newtheorem{theorem}{Theorem}[section]
\newtheorem{lemma}[theorem]{Lemma}
\newtheorem{proposition}[theorem]{Proposition}
\theoremstyle{definition}
\newtheorem{definition}[theorem]{Definition}
\newtheorem{remark}[theorem]{Remark}
\numberwithin{equation}{section}
\begin{document}
\baselineskip=15.5pt

\title[Tangent bundle of wonderful compactification of adjoint group]{Stability
of the tangent bundle of the wonderful compactification of an adjoint group}

\author[I. Biswas]{Indranil Biswas}

\address{School of Mathematics, Tata Institute of Fundamental
Research, Homi Bhabha Road, Bombay 400005, India}

\email{indranil@math.tifr.res.in}

\author[S. S. Kannan]{S. Senthamarai Kannan}

\address{Chennai Mathematical Institute, H1, SIPCOT IT Park, Siruseri,
Kelambakkam 603103, India}

\email{kannan@cmi.ac.in}

\subjclass[2000]{32Q26, 14M27, 14M17}

\keywords{Wonderful compactification, adjoint group, stability, irreducibility,
equivariant bundle}

\date{}

\begin{abstract}
Let $G$ be a complex linear algebraic group which is simple of adjoint type.
Let $\overline G$ be the wonderful compactification of $G$. We prove that
the tangent bundle of $\overline G$ is stable with respect to every polarization
on $\overline G$.
\end{abstract}

\maketitle

\section{Introduction}\label{sec1}

De Concini and Procesi constructed compactifications of complex simple groups of adjoint
type, which are known as wonderful compactifications. These
compactifications have turned out to be very
useful objects. Our aim here is to investigate equivariant vector bundles on a
wonderful compactification. One of the key concepts associated to a vector bundle on
a projective variety is the notion of stability introduced by Mumford.

We prove the following (see Theorem \ref{thm1}):

\begin{theorem}\label{thm0}
Let $\overline G$ be the wonderful compactification of a complex simple group $G$
of adjoint type. Take any polarization $L$ on $\overline G$. Then the
tangent bundle of $\overline G$ is stable with respect to $L$.
\end{theorem}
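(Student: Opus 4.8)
The plan is to exploit the action of $G\times G$ on $\overline{G}$ (by left and right translations) together with the fact that the tangent bundle $T\overline{G}$ is $(G\times G)$-equivariant, and to reduce stability to a statement about equivariant subsheaves. The key preliminary observation is that, since $G\times G$ is connected, it acts trivially on the N\'eron--Severi group of $\overline{G}$; hence for every $g\in G\times G$ and every coherent subsheaf $F\subset T\overline{G}$ one has $\operatorname{rank}(g^{\ast}F)=\operatorname{rank}(F)$ and $\deg_L(g^{\ast}F)=\deg_L(F)$, so that the slope $\mu_L$ is preserved by the action for \emph{every} polarization $L$. Consequently the argument will be uniform in $L$, and no slope computation on the boundary divisors will be needed.

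Granting this, I would argue as follows. If $T\overline{G}$ failed to be stable, then either it is not semistable, in which case its maximal destabilizing subsheaf (the first term of the Harder--Narasimhan filtration) is a canonical proper nonzero subsheaf, or it is semistable but not stable, in which case its socle supplies a canonical proper nonzero subsheaf of slope equal to $\mu_L(T\overline{G})$. In either case the canonicity of the construction, combined with the slope-invariance above, forces the resulting subsheaf to be $(G\times G)$-invariant as a subsheaf of $T\overline{G}$. Thus it suffices to show that $T\overline{G}$ admits no proper nonzero $(G\times G)$-equivariant saturated subsheaf.

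To rule these out I would restrict to the open dense orbit $U\subset\overline{G}$. As a $(G\times G)$-variety, $U\cong(G\times G)/\Delta G$, where $\Delta G$ is the diagonal, and the isotropy representation of $\Delta G\cong G$ on $T_eU\cong(\mathfrak{g}\oplus\mathfrak{g})/\Delta\mathfrak{g}\cong\mathfrak{g}$ is precisely the adjoint representation. Since $G$ is simple, $\mathfrak{g}$ is an \emph{irreducible} $G$-module, so its only $G$-submodules are $0$ and $\mathfrak{g}$; correspondingly the only $(G\times G)$-equivariant subbundles of $TU$ are $0$ and $TU$. Now let $F\subset T\overline{G}$ be a proper nonzero equivariant saturated subsheaf. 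Because $U$ is a single orbit and $F|_U$ is reflexive (being saturated in the locally free $TU$), its non-locally-free locus is an invariant closed subset of positive codimension, hence empty, so $F|_U$ is an equivariant subbundle and therefore equals $0$ or $TU$. If $F|_U=0$ then $F$ is a torsion subsheaf of the torsion-free sheaf $T\overline{G}$, whence $F=0$; if $F|_U=TU$ then $F$ has full rank, so the torsion-free quotient $T\overline{G}/F$ vanishes and $F=T\overline{G}$. Either way we contradict that $F$ is proper and nonzero, and stability follows.

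The heart of the matter is the irreducibility of the adjoint representation $\mathfrak{g}$, which makes the equivariant analysis collapse completely. By contrast, the two steps requiring genuine care are the reduction to equivariant subsheaves (treating the non-semistable and the strictly-semistable cases uniformly via the canonicity of the Harder--Narasimhan filtration and of the socle) and the verification that an equivariant saturated subsheaf is locally free on the single orbit $U$. I expect this bookkeeping, rather than any deep geometric input, to be the main technical obstacle; the genuine leverage comes entirely from the $(G\times G)$-symmetry, which is what allows the conclusion to hold simultaneously for all polarizations.
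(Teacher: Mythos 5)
There is a genuine gap, and it sits exactly where the paper has to do its real work: your reduction ``not stable $\Rightarrow$ there exists a proper nonzero $(G\times G)$-equivariant saturated subsheaf'' is false. In the non-semistable case the maximal destabilizing subsheaf does the job, and if $T\overline{G}$ were semistable with a \emph{canonical} proper subsheaf of the same slope your argument would go through; but in the strictly polystable case the socle is the whole sheaf, so it supplies nothing. Concretely, suppose $T\overline{G}\,\cong\, F\otimes_{\mathbb C} V$, where $F$ is a stable equivariant bundle and $V$ is an irreducible representation of $\widetilde{G}\times\widetilde{G}$ of dimension $\geq 2$ acting on the second tensor factor. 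Every saturated subsheaf of $F^{\oplus\dim V}$ of the same slope has the form $F\otimes W$ for a linear subspace $W\subset V$ (stable sheaves are simple, so such an inclusion is given by a linear map of the multiplicity spaces), hence the equivariant ones correspond to subrepresentations of $V$; since $V$ is irreducible there are none proper and nonzero. Such a bundle passes your test --- no proper nonzero equivariant saturated subsheaf --- yet it is not stable. Your orbit computation on $U\cong (G\times G)/\Delta G$ is correct as far as it goes, but it only rules out equivariant subsheaves; it cannot detect this configuration, because the destabilizing subsheaves $F\otimes W$ are permuted, not preserved, by the group.

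This overlooked case is precisely what the paper spends its effort on. After establishing polystability (Lemma \ref{lem1}, which matches your canonicity argument), the paper uses Atiyah's Krull--Schmidt theorem to write $E=\bigoplus F_i^{\oplus n_i}$, shows equivariance forces a single isotypic component $E=F\otimes_{\mathbb C} H^0(\overline{G}, Hom(F,E))$, and then invokes a PRV-type theorem (Lemma \ref{lem2}, via Rajan): the tensor product of two nontrivial irreducible $\widetilde G$-modules is never irreducible. Since the isotropy representation on $T_{e_0}\overline{G}\cong\mathfrak g$ is irreducible, this forces either $\dim H^0=1$ (stability) or $\mathrm{rank}(F)=1$, i.e.\ $T\overline{G}\cong\xi^{\oplus d}$. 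That last possibility also survives your test (take $V$ above of dimension $d$ and $F=\xi$), and eliminating it requires further geometric input: unirationality of $\overline{G}$ gives simple connectedness (Serre), and then the Brunella--Pereira--Touzet theorem on split tangent bundles would force $\overline{G}\cong({\mathbb C}{\mathbb P}^1)^d$, whose tangent bundle is visibly not of the form $\xi^{\oplus d}$. So the representation-theoretic and geometric ingredients you hoped to avoid (``no deep geometric input'') are not bookkeeping; they are indispensable for closing the case your reduction misses.
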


Theorem \ref{thm0} is proved using a result proved here on equivariant vector
bundles over $\overline G$ which we will now explain.

Take $G$ as in Theorem \ref{thm0}.
Let $\widetilde G$ be the universal cover of $G$. The action of $G\times G$ on
$\overline G$ produces an action of ${\widetilde G}\times {\widetilde G}$ on
$\overline G$. A holomorphic vector bundle on $\overline G$ is called equivariant
if it is equipped with a lift of the action of ${\widetilde G}\times {\widetilde G}$;
see Definition \ref{def1} for the details. Let $e_0\,\in\, G$ be the identity element.
The group $\widetilde G$ is the connected component, containing the identity element,
of the isotropy group of $e_0$ for the action of ${\widetilde G}\times {\widetilde G}$
on $\overline G$. If $(E\, ,\gamma)$ is an equivariant vector bundle on $\overline G$,
then the action $\gamma$ of ${\widetilde G}\times {\widetilde G}$ on $E$ produces an
action of $\widetilde G$ on the fiber $E_{e_0}$.

We prove the following (see Proposition \ref{prop2}):

\begin{proposition}\label{prop0}
Let $(E\, ,\gamma)$ be an equivariant vector bundle of rank $r$ on $\overline G$
such that the action of $\widetilde{G}$ on $E_{e_0}$ is irreducible. Then either
$E$ is stable or there is a holomorphic line bundle $\xi$ on $\overline G$ such that
$E$ is holomorphically isomorphic to $\xi^{\oplus r}$.
\end{proposition}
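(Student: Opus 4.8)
Suppose $E$ is not stable; the plan is to show that $E$ must then be polystable with a single isotypic factor, and afterwards to exploit the irreducibility of the $\widetilde{G}$-action on $E_{e_0}$, together with the simple-connectedness of $\widetilde{G}$, to force that factor to have rank one. The robust first ingredient is that every canonically associated subsheaf of $E$ is automatically equivariant. Since $\widetilde{G}\times\widetilde{G}$ is connected, it fixes the numerical class of $L$ and hence preserves the slope $\mu_L$; acting by automorphisms of $\overline{G}$ lifted to $E$, it therefore sends the (unique) maximal destabilizing subsheaf of $E$, and the (unique) socle of $E$ when $E$ is semistable, to itself. For such a canonical subsheaf $F$, restriction to the open orbit $O\subset\overline G$ of $e_0$ makes $F|_O$ a homogeneous subsheaf, hence a subbundle whose fiber $F_{e_0}$ is a $\widetilde{G}$-submodule of $E_{e_0}$; irreducibility of $E_{e_0}$ forces $F_{e_0}$ to be $0$ or all of $E_{e_0}$. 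Applying this to the maximal destabilizing subsheaf excludes $0\subsetneq\operatorname{rk}F\subsetneq r$, so $E$ is semistable; applying it to the socle (nonzero and saturated) forces the socle to equal $E$, so $E$ is polystable. Each isotypic component of a polystable sheaf is canonical, hence equivariant, and the same fiber argument leaves exactly one: $E\cong E_1^{\oplus m}$ with $E_1$ stable and $r=m\cdot\operatorname{rk}E_1$.

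It remains to prove $\operatorname{rk}E_1=1$, and for this I would pass to representations. As $E_1$ is stable, $\operatorname{End}(E_1)=\mathbb{C}$, so $H^0(\overline G,\mathcal{E}nd(E))\cong M_m(\mathbb{C})$ as an algebra, on which $\widetilde G\times\widetilde G$ acts by algebra automorphisms; by Skolem--Noether this action is inner, giving a homomorphism $\widetilde G\times\widetilde G\to PGL_m$. The subbundle $\mathcal{O}\otimes M_m\subset\mathcal{E}nd(E)$ generated by these global sections is equivariant, and its centralizer, namely $\mathcal{E}nd(E_1)$ embedded diagonally, is therefore equivariant as well. Restricting to $O$ and taking fibers at $e_0$ exhibits the multiplicity space $W\cong\mathbb{C}^m$ and the space $U:=(E_1)_{e_0}$ of dimension $\operatorname{rk}E_1$ as \emph{projective} $\widetilde G$-representations with $E_{e_0}\cong U\otimes W$. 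Because $\widetilde G$ is simple and simply connected, both projective representations lift to genuine ones, so $E_{e_0}\cong U\otimes W$ holds as honest $\widetilde G$-modules.

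The conclusion now follows from a standard fact: for a simple, simply connected group, a tensor product $U\otimes W$ of two nonzero finite-dimensional representations is irreducible only if $\dim U=1$ or $\dim W=1$. Indeed, if $\dim U,\dim W\ge 2$ then both $\operatorname{End}(U)$ and $\operatorname{End}(W)$ contain the adjoint representation as a summand (the Lie-algebra action on a nontrivial irreducible being injective), whence $\operatorname{End}(U)\otimes\operatorname{End}(W)$ has at least two $\widetilde G$-invariants and $U\otimes W$ is reducible. Since $E_{e_0}$ is irreducible and $\dim W=m>1$ in the non-stable case, we must have $\dim U=\operatorname{rk}E_1=1$; thus $E_1$ is a line bundle $\xi$ and $E\cong\xi^{\oplus r}$.

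I expect the main obstacle to be the identification $E_{e_0}\cong U\otimes W$ as genuine $\widetilde G$-modules. The equivariant structure on $E\cong E_1^{\oplus m}$ need not respect this tensor decomposition, so $E_1$, the multiplicity space $W$, and the factor $U$ carry only $PGL$-valued actions \emph{a priori}; rigidifying these into ordinary representations is precisely where the simple-connectedness of $\widetilde G$ enters, and is the reason the argument is run over the universal cover rather than over $G$ itself. By contrast, the reduction to the isotypic polystable case is formal once one knows that canonical subsheaves are equivariant and that fibers over $e_0$ are $\widetilde G$-modules.
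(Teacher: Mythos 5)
Your argument is correct, and its first half is essentially the paper's: the equivariance of the Harder--Narasimhan filtration and of the socle (by uniqueness plus connectedness of $\widetilde{G}\times\widetilde{G}$), together with the fact that fibers at $e_0$ of equivariant subsheaves are $\widetilde{G}$-submodules, is exactly Lemma \ref{lem1}, and the reduction to a single isotypic factor $E\cong E_1^{\oplus m}$ matches the opening of the paper's proof of Proposition \ref{prop2} (phrased there via Atiyah's Krull--Schmidt theorem). Where you genuinely diverge is the rank-one step. The paper never touches projective representations: it first proves Proposition \ref{prop1} --- any bundle with $\beta(g)^*E\cong E$ for all $g$ carries a genuine equivariant structure, obtained by splitting the Lie algebra extension of $\mathfrak{g}\oplus\mathfrak{g}$ by $\mathrm{Lie}(\mathrm{Aut}(E))$ (semisimplicity, via Bourbaki) and integrating over the simply connected group --- applies it to the stable factor $F=E_1$ (whose pullbacks are isomorphic to $F$ by Krull--Schmidt), and then uses the equivariant canonical isomorphism $E\cong F\otimes_{\mathbb C}H^0(\overline{G},Hom(F,E))$, so the multiplicity space is an honest $\widetilde{G}$-module from the start; the tensor-product obstruction is then quoted as Lemma \ref{lem2} (a special case of PRV, or Rajan's theorem). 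You instead leave $E_1$ bare, factor the isotropy action on $E_{e_0}=U\otimes W$ through $GL(U)\times GL(W)$ modulo scalars via Skolem--Noether, and lift the two $PGL$-valued homomorphisms through $SL\to PGL$ using simple connectedness; and you replace the citation of PRV/Rajan by an elementary invariant count (the adjoint representation sits inside both $\operatorname{End}(U)$ and $\operatorname{End}(W)$, giving two independent invariants in $\operatorname{End}(U\otimes W)$, contradicting Schur). Both routes are sound and both spend simple connectedness, just in different places. Two small points you should make explicit: after choosing lifts $\widetilde{A},\widetilde{B}$, the true action equals $\widetilde{A}\otimes\widetilde{B}$ only up to a character $\widetilde{G}\to\mathbb{C}^*$, which is trivial because $\widetilde{G}$ is perfect; and your claim that $\operatorname{End}(U)$ contains the adjoint requires $U$ nontrivial, which holds in your application since $U$ and $W$ are forced to be irreducible of dimension at least $2$, hence nontrivial for a simple group. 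What each approach buys: the paper's detour yields Proposition \ref{prop1}, a reusable statement of independent interest, while yours avoids both the direct-image construction underlying that proposition and the appeal to PRV, at the cost of the projective-representation bookkeeping at a single fiber.
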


We show that the tangent bundle $T\overline G$ is not isomorphic to
$\xi^{\oplus d}$, where $\xi$ is some holomorphic line bundle on $\overline G$,
and $d\,=\, \dim_{\mathbb C} G$. In view of this result, Theorem \ref{thm0}
follows from Proposition \ref{prop0}.

A stable vector bundle admits an irreducible Einstein--Hermitian connection. It
would be very interesting to be able to describe the Einstein--Hermitian
structure of the tangent bundle of $\overline G$.

In \cite{Ka}, Kato has carried out a detailed investigation of the equivariant
bundles on partial compactifications of reductive groups.

\section{Equivariant vector bundles on $\overline G$}

Let $G$ be a connected linear algebraic group defined over $\mathbb C$ such that
the Lie algebra of $G$ is simple and the center of $G$ is trivial. In other
words, $G$ is simple of adjoint type. The group $G\times G$ acts on $G$: the
action of any $(g_1\, ,g_2)\,\in\, G\times G$ is
the map $y\, \longmapsto\, g_1yg^{-1}_2$.
Let $\overline{G}$ be the wonderful compactification of $G$ \cite{DP}.
A key property of the wonderful compactification of $G$ is that the above action
of $G\times G$ on $G$ extends to an action of $G\times G$ on $\overline{G}$. Let
\begin{equation}\label{e0}
\pi\, :\, \widetilde{G}\, \longrightarrow\, G
\end{equation}
be the universal cover. Using the projection $\pi$ in \eqref{e0}, the above
mentioned action of $G\times G$ on $\overline{G}$ produces an action of
$\widetilde{G}\times \widetilde{G}$ on $\overline{G}$
\begin{equation}\label{e1}
\beta\, :\,\widetilde{G}\times \widetilde{G}\,\longrightarrow\,
\text{Aut}^0(\overline{G})\, ,
\end{equation}
where $\text{Aut}^0(\overline{G})$ is the connected component, containing the
identity element, of the group of automorphisms of the variety $\overline{G}$.

\begin{definition}\label{def1}
An {\em equivariant} vector bundle on $\overline G$ is a pair $(E\, ,\gamma)$,
where $E$ is a holomorphic vector bundle on $\overline G$ and
$$
\gamma\, :\, \widetilde{G}\times \widetilde{G}\times E\,\longrightarrow\, E
$$
is a holomorphic action of $\widetilde{G}\times \widetilde{G}$ on the total
space of $E$, such that the following two conditions hold:
\begin{enumerate}
\item the projection of $E$ to $\overline G$ intertwines the actions of
$\widetilde{G}\times \widetilde{G}$ on $E$ and $\overline G$, and

\item the action of $\widetilde{G}\times \widetilde{G}$ on $E$ preserves the
linear structure of the fibers of $E$.
\end{enumerate}
\end{definition}

Note that the first condition in Definition \ref{def1} implies that the action of any
$g\,\in\, \widetilde{G}\times \widetilde{G}$ sends a fiber $E_x$ to the fiber
$E_{\beta(g)(x)}$, where $\beta$ is the homomorphism in \eqref{e1}. The second
condition in Definition \ref{def1} implies that the self-map of $E$ defined by $v\,
\longmapsto\,\gamma(g\, ,v)$ is a holomorphic isomorphism of the vector bundle $E$
with the pullback $\beta(g^{-1})^*E$. Therefore, if $(E\, ,\gamma)$ is an equivariant
vector bundle on $\overline G$, then for every $g\,\in\,
\widetilde{G}\times \widetilde{G}$, the pulled back holomorphic vector bundle
$\beta(g)^*E$ is holomorphically isomorphic to $E$. The following proposition
is a converse statement of it.

\begin{proposition}\label{prop1}
Let $E$ be a holomorphic vector bundle on $\overline G$ such that
for every $g\,\in\,
\widetilde{G}\times \widetilde{G}$, the pulled back holomorphic vector bundle
$\beta(g)^*E$ is holomorphically isomorphic to $E$. Then there is a holomorphic
action $\gamma$ of $\widetilde{G}\times \widetilde{G}$ on $E$ such that the pair
$(E\, ,\gamma)$ is an equivariant vector bundle on $\overline G$.
\end{proposition}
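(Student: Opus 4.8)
The plan is to realize the equivariant structure as an isomorphism between the two natural pullbacks of $E$ to $\widetilde{G}\times\widetilde{G}\times\overline{G}$, and then to arrange that this isomorphism satisfies the cocycle identity defining a group action. Write $H\,:=\,\widetilde{G}\times\widetilde{G}$; since $\widetilde{G}$ is the universal cover of the adjoint simple group $G$, the group $H$ is connected, semisimple and simply connected, so its character group $X^*(H)$ is trivial and $\operatorname{Pic}(H)\,=\,0$. Let $a\, ,q\,:\, H\times\overline{G}\,\longrightarrow\,\overline{G}$ be respectively the action map $(g\, ,x)\,\longmapsto\,\beta(g)(x)$ and the projection $(g\, ,x)\,\longmapsto\, x$, with $\beta$ as in \eqref{e1}, and let $p\,:\, H\times\overline{G}\,\longrightarrow\, H$ be the projection to the first factor.

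First I would construct a holomorphic isomorphism $\Phi\,:\, a^*E\,\longrightarrow\, q^*E$ over $H\times\overline{G}$. The restriction of $\mathcal{H}om(a^*E\, ,q^*E)$ to the slice $\{g\}\times\overline{G}$ is $\mathcal{H}om(\beta(g)^*E\, ,E)$, whose space of global sections has dimension $\dim\operatorname{Hom}(\beta(g)^*E\, ,E)\,=\,\dim\operatorname{End}(E)$ by the hypothesis $\beta(g)^*E\,\cong\, E$. As this dimension is independent of $g$, the direct image $p_*\mathcal{H}om(a^*E\, ,q^*E)$ is locally free on $H$ and its formation commutes with base change, its fibre over $g$ being $\operatorname{Hom}(\beta(g)^*E\, ,E)$. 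Using that the isomorphism locus is open and is cut out fibrewise by the non-vanishing of a determinant, one extracts a global section that is a fibrewise isomorphism and whose value at the identity $e\,\in\, H$ is $\operatorname{id}_E$; the obstruction to such a globalization lives in $\operatorname{Pic}(H)$ and $X^*(H)$, both of which vanish. This yields $\Phi$, normalized so that $\Phi|_{\{e\}\times\overline{G}}\,=\,\operatorname{id}_E$.

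The isomorphism $\Phi$ assigns to each $(g\, ,x)$ a linear isomorphism $E_{\beta(g)(x)}\,\longrightarrow\, E_x$; inverting these and letting $g$ vary defines a holomorphic, fibrewise-linear map $\gamma\,:\, H\times E\,\longrightarrow\, E$ covering $\beta$, with $\gamma(e\, ,-)\,=\,\operatorname{id}_E$. Then $(E\, ,\gamma)$ satisfies conditions (1) and (2) of Definition \ref{def1} by construction, and it is an equivariant vector bundle precisely when $\gamma$ is a group action, i.e. when $\gamma(g_1g_2\, ,-)\,=\,\gamma(g_1\, ,-)\circ\gamma(g_2\, ,-)$. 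Measuring the failure of this identity defines a regular map $c\,:\, H\times H\,\longrightarrow\,\operatorname{Aut}(E)$, a $2$-cocycle valued in the automorphism group of the bundle $E$; here $\operatorname{Aut}(E)$ is the group of units of the finite-dimensional algebra $\operatorname{End}(E)\,=\,H^0(\overline{G}\, ,\mathcal{E}nd(E))$, hence a connected linear algebraic group containing the scalars $\mathbb{C}^*$ as a central subgroup.

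The main obstacle is to modify $\Phi$ by a regular map $H\,\longrightarrow\,\operatorname{Aut}(E)$ so as to trivialize $c$, which is exactly what upgrades $\gamma$ to a genuine action. This is where the hypotheses on $H$ are used decisively: the part of the obstruction lying in the abelianization of $\operatorname{Aut}(E)$ — in particular its scalar component in $\mathbb{C}^*$ — is governed by $X^*(H)\,=\,0$, while the remaining part is governed by $\operatorname{Pic}(H)\,=\,0$ together with $H$ being simply connected. I therefore expect the argument to reduce the cocycle condition to these two vanishing statements for the simply connected semisimple group $H\,=\,\widetilde{G}\times\widetilde{G}$, and to settle them by the rigidity of isomorphisms of bundles over the connected base $H$ combined with the triviality of the characters and of the Picard group of $H$. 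Once $c$ has been trivialized, the corrected $\Phi$ produces a $\gamma$ satisfying the cocycle identity, and $(E\, ,\gamma)$ is the desired equivariant vector bundle, completing the proof.
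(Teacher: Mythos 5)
Your plan has the right overall shape --- an equivariant structure is indeed the same thing as a compatible family of isomorphisms $\beta(g)^*E\,\cong\, E$ satisfying the cocycle identity --- but both of its crucial steps are justified by appealing to $\operatorname{Pic}(H)\,=\,0$ and $X^*(H)\,=\,0$, and these abelian invariants simply do not control the obstructions in question once $\operatorname{rank}(E)\,>\,1$. For the first step, the existence of a single fibrewise-isomorphism section $\Phi$ of $p_*\mathcal{H}om(a^*E\, ,q^*E)$ is exactly the assertion that a certain torsor over $H$ under the group $\operatorname{Aut}(E)$ is trivial, where $\operatorname{Aut}(E)$ is the unit group of the finite-dimensional algebra $\operatorname{End}(E)$. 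This group is in general nonabelian (if $E\,\cong\, F^{\oplus n}$ with $F$ simple, then $\operatorname{Aut}(E)\,\cong\, GL_n(\mathbb{C})$, and triviality of the torsor amounts to triviality of a rank-$n$ vector bundle on the affine variety $H$), so the obstruction is a nonabelian $H^1$, not an element of $\operatorname{Pic}(H)$ or of $X^*(H)$; no vanishing theorem is cited that applies to it. For the second step, a ``$2$-cocycle valued in $\operatorname{Aut}(E)$'' is not an object of an abelian cohomology theory with vanishing results: the precise content of trivializing $c$ is that the extension of groups
$$
1\,\longrightarrow\,\operatorname{Aut}(E)\,\longrightarrow\,\widetilde{\operatorname{Aut}}(E)\,\longrightarrow\, H\,\longrightarrow\, 1
$$
(where $\widetilde{\operatorname{Aut}}(E)$ is the group of pairs $(g\, ,f)$ with $f\,:\,\beta(g^{-1})^*E\,\to\, E$ an isomorphism) admits a homomorphic section, and such an extension does not decompose into an ``abelianized part'' governed by $X^*(H)$ plus a ``remaining part'' governed by $\operatorname{Pic}(H)$ and $\pi_1(H)$. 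Your intuition is correct precisely in the rank-one case: there $\operatorname{Aut}(E)\,=\,\mathbb{C}^*$ is central, the torsor is a line bundle on $H$, the cocycle is a $\mathbb{C}^*$-valued central cocycle, and one recovers Mumford's classical linearization argument for line bundles. It does not extend to higher rank by the route you describe.

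The missing idea --- which is the heart of the paper's proof --- is Lie-theoretic and solves both of your problems in one stroke. One shows that $\widetilde{\operatorname{Aut}}(E)$ above is a connected complex algebraic group (it is a Zariski open subset of the total space of the vector bundle $q_*((p_1^*E)\otimes(\widehat\beta^*E)^\vee)$ over $H$; surjectivity onto $H$ is your hypothesis $\beta(g)^*E\,\cong\, E$). Passing to Lie algebras in the resulting exact sequence, the quotient $\mathfrak{g}\oplus\mathfrak{g}$ is semisimple, so by the Levi--Malcev theorem the surjection $A(E)\,\to\,\mathfrak{g}\oplus\mathfrak{g}$ admits a Lie algebra splitting $\tau$; since $H\,=\,\widetilde{G}\times\widetilde{G}$ is simply connected, $\tau$ integrates to a homomorphism $\widetilde\tau\,:\, H\,\to\,\widetilde{\operatorname{Aut}}(E)$ with $\rho\circ\widetilde\tau\,=\,\mathrm{Id}$, and $\widetilde\tau$ \emph{is} the desired action $\gamma$. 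Note that this homomorphic section in particular produces your $\Phi$ and trivializes your cocycle simultaneously; semisimplicity of $\mathfrak{g}$ (via Levi's theorem), rather than vanishing of $\operatorname{Pic}$ and characters, is the decisive input. If you want to salvage your two-step outline, you would have to prove both the triviality of the nonabelian torsor and the splitting of the nonabelian extension, and the only available tool for either is essentially this Lie-theoretic argument.
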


\begin{proof}
Let $\text{Aut}(E)$ denote the group of holomorphic automorphisms of the vector bundle
$E$ over the
identity map of $\overline G$. This set $\text{Aut}(E)$ is the Zariski open subset
of the affine space $H^0({\overline G},\, E\otimes E^\vee)$ defined by the
locus of invertible
endomorphisms of $E$. Therefore, $\text{Aut}(E)$ is a connected complex algebraic group.

Let $\widetilde{\text{Aut}}(E)$ denote the set of all pairs of the form
$(g\, ,f)$, where $g\, \in\, \widetilde{G}\times \widetilde{G}$ and
$$
f\, :\, \beta(g^{-1})^*E\,\longrightarrow\, E
$$
is a holomorphic isomorphism of vector bundles. This set $\widetilde{\text{Aut}}(E)$
has a tautological structure of a group
$$
(g_2\, ,f_2)\cdot (g_1\, ,f_1)\,=\, (g_2g_1\, , f_2\circ f_1)\, .
$$
We will show that it is a connected complex algebraic group.

Let $p_1\, :\, \widetilde{G}\times \widetilde{G}\times{\overline G}\,\longrightarrow
\, {\overline G}$ be the projection to the last factor. Let
$$
\widehat{\beta}\, :\, \widetilde{G}\times \widetilde{G}\times{\overline G}\,
\longrightarrow\, {\overline G}
$$
be the algebraic morphism defined by $(g\, ,y)\,\longmapsto\, \beta(g^{-1})(y)$,
where $g\,\in\,\widetilde{G}\times \widetilde{G}$ and $y\, \in\,
{\overline G}$. Let
$$
q\, :\, \widetilde{G}\times \widetilde{G}\times{\overline G}\,\longrightarrow\,
\widetilde{G}\times \widetilde{G}
$$
be the projection to the first two factors. Now consider the direct image
$$
{\mathcal E}\, :=\, q_*((p^*_1E)\otimes (\widehat{\beta}^*E)^\vee)\,\longrightarrow\,
\widetilde{G}\times \widetilde{G}\, .
$$
It is locally free. The set $\widetilde{\text{Aut}}(E)$ is a Zariski open subset
of the total space of the algebraic vector bundle $\mathcal E$. Therefore,
$\widetilde{\text{Aut}}(E)$ is a connected complex algebraic group.

The Lie algebra of $G$ will be denoted by $\mathfrak g$. The Lie algebra of
$\widetilde{\text{Aut}}(E)$ will be denoted by $A(E)$.
We have a short exact sequence of groups
\begin{equation}\label{e2}
e\,\longrightarrow\,\text{Aut}(E)\,\longrightarrow\,\widetilde{\text{Aut}}(E) \,
\stackrel{\rho}{\longrightarrow}\,\widetilde{G}\times \widetilde{G}\,
\longrightarrow\,e\, ,
\end{equation}
where $\rho$ sends any $(g\, ,f)$ to $g$. Let
\begin{equation}\label{e3}
\rho'\, :\, A(E)\,\longrightarrow\,{\mathfrak g}\oplus {\mathfrak g}
\end{equation}
be the homomorphism of Lie algebras corresponding to
$\rho$ in \eqref{e2}. Since ${\mathfrak g}
\oplus {\mathfrak g}$ is semisimple, there is a homomorphism of Lie algebras
$$
\tau\, :\, {\mathfrak g}\oplus {\mathfrak g}\,\longrightarrow\,A(E)
$$
such that
\begin{equation}\label{es}
\rho'\circ\tau\,=\, \text{Id}_{{\mathfrak g}\oplus {\mathfrak g}}
\end{equation}
\cite[p. 91, Corollaire 3]{Bo}. Fix such a homomorphism $\tau$ satisfying \eqref{es}.
Since the group $\widetilde{G}\times \widetilde{G}$
is simply connected, there is a unique holomorphic homomorphism
$$
\widetilde{\tau}\, :\, \widetilde{G}\times \widetilde{G}\,\longrightarrow\,
\widetilde{\text{Aut}}(E)
$$
such that the corresponding homomorphism of Lie algebras coincides with
$\tau$. From \eqref{es} it follows immediately that $\rho\circ\widetilde{\tau}\,=\,
{\rm Id}_{\widetilde{G}\times\widetilde{G}}$.

We now note that $\widetilde{\tau}$ defines an action of $\widetilde{G}\times
\widetilde{G}$ on $E$. The pair $(E\, ,\widetilde{\tau})$ is an equivariant
vector bundle.
\end{proof}

\section{Irreducible representations and stability}

Fix a very ample class $L\, \in\, \text{NS}(\overline{G})$, where $\text{NS}(
\overline{G})$ is the N\'eron--Severi group of $\overline{G}$. The degree of
a torsionfree coherent sheaf $F$ on $\overline{G}$ is defined to be
$$
\text{degree}(F)\, :=\, (c_1(F)\cup c_1(L)^{d-1})\cap [\overline{G}]
\,\in\,\mathbb Z\, ,
$$
where $d\,=\, \dim_{\mathbb C}G$. If $\text{rank}(F)\, \geq\, 1$, then
$$
\mu(F)\,:=\, \frac{\text{degree}(F)}{\text{rank}(F)}\,\in\, \mathbb Q
$$
is called the \textit{slope} of $F$.

A holomorphic vector bundle $F$ over $\overline{G}$ is called \textit{stable}
(respectively, \textit{semistable}) if for every nonzero coherent subsheaf $F'\,
\subset\, F$ with $\text{rank}(F')\, <\,\text{rank}(F)$, the inequality
$$
\mu(F')\, <\, \mu(F) ~\ \text{ (respectively, }~\ \mu(F')\, \leq\, \mu(F){\rm )}
$$
holds. A holomorphic vector bundle on $\overline{G}$ is called \textit{polystable} if
it is a direct sum of stable vector bundles of same slope.

Let
$$
e_0\,\in\, G\, \subset\, \overline{G}
$$
be the identity element. Let $\text{Iso}_{e_0}\,\subset\, \widetilde{G}\times
\widetilde{G}$ be the isotropy subgroup of $e_0$ for the action of $\widetilde{G}
\times \widetilde{G}$ on $\overline{G}$. The connected component of $\text{Iso}_{e_0}$
containing the identity element is $\widetilde{G}$.

If $(E\, ,\gamma)$ is an equivariant vector bundle on $\overline G$, then $\gamma$
gives an action of $\text{Iso}_{e_0}$ on the fiber $E_{e_0}$. In particular,
we get an action of $\widetilde{G}$ on $E_{e_0}$.

\begin{lemma}\label{lem1}
Let $(E\, ,\gamma)$ be an equivariant vector bundle on $\overline G$ such that the
above action of $\widetilde{G}$ on $E_{e_0}$ is irreducible. Then
the vector bundle $E$ is polystable.
\end{lemma}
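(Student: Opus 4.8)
The plan is to exploit the fact that the canonical subsheaves attached to a semistable or unstable bundle are automatically preserved by the connected group $\widetilde{G}\times\widetilde{G}$ acting through $\gamma$, and then to read off their ranks from the fiber $E_{e_0}$, where irreducibility of the $\widetilde{G}$-action allows only the trivial possibilities. Throughout I use that $e_0$ lies in the open dense orbit $G\subset\overline{G}$ and that the identity component of its isotropy subgroup is $\widetilde{G}$.

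The technical heart is the following principle: if $F\subset E$ is a saturated coherent subsheaf that is invariant under $\gamma$, then $\mathrm{rank}(F)\in\{0,\mathrm{rank}(E)\}$. Since $F$ is saturated, the quotient $E/F$ is torsion-free, so over the smooth open orbit $G$ both $F$ and $E/F$ are locally free and $F|_G\hookrightarrow E|_G$ is a subbundle inclusion; in particular the fiber $F_{e_0}$ is an honest linear subspace of $E_{e_0}$. Invariance of $F$ under $\gamma$, together with the fact that $e_0$ is fixed by $\mathrm{Iso}_{e_0}\supset\widetilde{G}$, shows that $F_{e_0}$ is a $\widetilde{G}$-invariant subspace of $E_{e_0}$, hence $0$ or all of $E_{e_0}$ by the irreducibility hypothesis. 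A saturated subsheaf of full rank equals $E$, giving the dichotomy.

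To prove semistability I would apply this to the maximal destabilizing subsheaf. If $E$ were not semistable, the first term $F$ of its Harder--Narasimhan filtration would satisfy $0<\mathrm{rank}(F)<\mathrm{rank}(E)$; being uniquely determined it is canonical, hence invariant under the connected group $\widetilde{G}\times\widetilde{G}$, i.e. invariant under $\gamma$. This contradicts the rank dichotomy, so $E$ is semistable. To upgrade to polystability I would run the identical argument with the socle $S$ of $E$, the unique saturated maximal polystable subsheaf of slope $\mu(E)$. Being canonical, $S$ is $\gamma$-invariant; since $S\neq 0$ the dichotomy forces $\mathrm{rank}(S)=\mathrm{rank}(E)$, and a saturated full-rank subsheaf equals $E$. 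Thus $E=S$ is polystable.

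I expect the main obstacle to be justifying the fiber principle rigorously: one must check that a $\gamma$-invariant saturated subsheaf, a priori only coherent, genuinely restricts to a subbundle on the open orbit so that $F_{e_0}\hookrightarrow E_{e_0}$ is defined, and that the $\gamma$-action restricts to the given $\widetilde{G}$-action on $E_{e_0}$ in a way that makes $F_{e_0}$ visibly invariant. The canonicity-implies-invariance step also deserves care: it relies on the fact that pullback by $\beta(g)$ carries the Harder--Narasimhan filtration, respectively the socle, of $E$ to that of $\beta(g)^*E\cong E$, so that connectedness of $\widetilde{G}\times\widetilde{G}$ forces the filtration to be fixed. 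Once these points are secured, the irreducibility hypothesis does the rest.
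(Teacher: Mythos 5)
Your proposal is correct and follows essentially the same route as the paper: both prove semistability by showing the first Harder--Narasimhan term is preserved by the connected group $\widetilde G\times\widetilde G$ (which acts trivially on the N\'eron--Severi group, so the polarization-dependent canonical subsheaf is invariant), then run the identical argument on the maximal polystable subsheaf (socle), with irreducibility of the $\widetilde G$-action on $E_{e_0}$ forcing each invariant subsheaf to have full rank and hence equal $E$. The only difference is cosmetic: the paper works directly with the image of $(E_1)_{e_0}$ in $E_{e_0}$ (so it never needs your detour through saturation and local freeness on the open orbit, which in any case holds because the non-locally-free locus is an invariant proper closed subset and therefore lies in the boundary $\overline G\setminus G$).
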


\begin{proof}
Let
\begin{equation}\label{e4}
E_1 \, \subset\,\cdots \, \subset\, E_n\,=\, E
\end{equation}
be the Harder--Narasimhan filtration of $E$ \cite[p. 16, Theorem 1.3.4]{HL}.
Since $\widetilde{G}\times \widetilde{G}$ is connected, the action of
$\widetilde{G}\times \widetilde{G}$ on $\overline G$ preserves the
N\'eron--Severi class $L$. Therefore, the filtration in \eqref{e4}
is preserved by the action of $\widetilde{G}\times \widetilde{G}$ on $E$. Note that
$(E_1)_{e_0}\,\not=\,0$ because in that case $E_1\vert_G\,=\, 0$ by the
equivariance of $E$, which in turn implies that $E_1\,=\, 0$. Now,
from the irreducibility of the action of $\widetilde{G}$ on $E_{e_0}$ we conclude
that $(E_1)_{e_0}\,=\, E_{e_0}$. In particular, $\text{rank}(E_1)\,=\,
\text{rank}(E)$. This implies that $E_1\,=\, E$. Hence $E$ is semistable.

Let
$$
F\, \subset\, E
$$
be the unique maximal polystable subsheaf of the semistable vector bundle $E$
\cite[p. 23, Theorem 1.5.9]{HL}. From the uniqueness of $F$ and the connectivity
of $\widetilde{G}\times \widetilde{G}$ we conclude that $F$ is preserved by the action
of $\widetilde{G}\times \widetilde{G}$ on $E$. Just as done above, using the
irreducibility of the action of $\widetilde{G}$ on $E_{e_0}$ we conclude
that $F_{e_0}\,=\, E_{e_0}$. Hence $F\,=\, E$, implying that $E$ is polystable.
\end{proof}

The following lemma is well-known.

\begin{lemma}\label{lem2}
Let $V_1$ and $V_2$ be two finite dimensional irreducible complex
$\widetilde G$--modules such that both $V_1$ and $V_2$ are nontrivial.
Then the $\widetilde G$--module $V_1\otimes V_2$ is not irreducible.
\end{lemma}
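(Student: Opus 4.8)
The plan is to prove the contrapositive at the level of endomorphism algebras: I will show that $\dim_{\mathbb C}\mathrm{End}_{\widetilde G}(V_1\otimes V_2)\,\geq\, 2$, which by Schur's lemma is incompatible with $V_1\otimes V_2$ being irreducible. Since $\widetilde G$ is semisimple, every finite-dimensional $\widetilde G$-module is completely reducible, so the dimension of a $\mathrm{Hom}$-space between two such modules equals the sum, over the isomorphism classes of irreducible $\widetilde G$-modules $W$, of the products of the multiplicities of $W$ in the two modules. The strategy is therefore to identify two distinct irreducible $\widetilde G$-modules that occur in a common auxiliary module built from $V_1$ and $V_2$.

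First I would rewrite the endomorphism space in a symmetric form. Using the self-duality of $\mathrm{End}(V_i)\,=\,V_i\otimes V_i^\vee$ together with the standard tensor--hom adjunction and the passage to invariants, one obtains
$$
\mathrm{End}_{\widetilde G}(V_1\otimes V_2)\,\cong\,\bigl(\mathrm{End}(V_1)\otimes\mathrm{End}(V_2)\bigr)^{\widetilde G}\,\cong\,\mathrm{Hom}_{\widetilde G}\bigl(\mathrm{End}(V_1),\,\mathrm{End}(V_2)\bigr).
$$
The dimension of the right-hand side counts the irreducible $\widetilde G$-modules common to $\mathrm{End}(V_1)$ and $\mathrm{End}(V_2)$, each weighted by the product of its multiplicities in the two modules. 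It thus suffices to exhibit two distinct common constituents.

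Next I would produce these two constituents. The trivial module occurs with multiplicity exactly one in each $\mathrm{End}(V_i)$, since $V_i$ is irreducible and hence $\mathrm{End}(V_i)^{\widetilde G}\,=\,\mathbb C$ by Schur's lemma. For the second constituent, note that the derivative of the representation $\widetilde G\to\mathrm{GL}(V_i)$ is a Lie algebra homomorphism $\mathfrak g\to\mathrm{End}(V_i)$ that intertwines the adjoint action of $\widetilde G$ on $\mathfrak g$ with the conjugation action on $\mathrm{End}(V_i)$, so it is a morphism of $\widetilde G$-modules. Because $V_i$ is nontrivial and $\widetilde G$ is connected, this map is nonzero; its kernel is an ideal of $\mathfrak g$, which must vanish since $\mathfrak g$ is simple. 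Hence $\mathfrak g$, the adjoint module, embeds $\widetilde G$-equivariantly into $\mathrm{End}(V_i)$, and it is irreducible and distinct from the trivial module.

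Combining the two observations gives $\dim_{\mathbb C}\mathrm{Hom}_{\widetilde G}(\mathrm{End}(V_1),\,\mathrm{End}(V_2))\,\geq\, 1+1\,=\,2$, so $V_1\otimes V_2$ cannot be irreducible. The main point requiring care is the claim that the adjoint module genuinely appears in each $\mathrm{End}(V_i)$; this is precisely the equivariance and injectivity of the map $\mathfrak g\to\mathrm{End}(V_i)$ established above, and it is here that both the simplicity of $\mathfrak g$ and the nontriviality of the $V_i$ are used in an essential way.
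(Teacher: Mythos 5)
Your proof is correct, and it takes a genuinely different route from the paper: the paper does not prove this lemma at all, but instead cites it as a very special case of the (now proved) PRV conjecture \cite{PRV} and as an immediate consequence of Rajan's unique-decomposition theorem \cite{Ra}. Your argument is self-contained: writing $\mathrm{End}_{\widetilde G}(V_1\otimes V_2)\,\cong\,\bigl(\mathrm{End}(V_1)\otimes\mathrm{End}(V_2)\bigr)^{\widetilde G}\,\cong\,\mathrm{Hom}_{\widetilde G}\bigl(\mathrm{End}(V_1),\mathrm{End}(V_2)\bigr)$ and exhibiting two distinct common irreducible constituents --- the trivial module, which has multiplicity one in each $\mathrm{End}(V_i)$ by Schur's lemma, and the adjoint module $\mathfrak g$, which embeds equivariantly in each $\mathrm{End}(V_i)$ via the differential of the representation (nonzero because $V_i$ is nontrivial and $\widetilde G$ is connected, injective because its kernel is an ideal of the simple algebra $\mathfrak g$) --- you get $\dim\mathrm{End}_{\widetilde G}(V_1\otimes V_2)\,\geq\,2$, which is incompatible with irreducibility. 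All the intermediate steps (equivariance of the differential, complete reducibility underlying the multiplicity count, irreducibility of the adjoint module for connected $\widetilde G$ with $\mathfrak g$ simple) check out. What your approach buys: it is elementary, avoids any machinery beyond Schur and complete reducibility, and it makes transparent exactly where simplicity of $\mathfrak g$ enters --- which matters, since the statement is false for merely semisimple groups (for $\widetilde G\,=\,\mathrm{SL}_2\times\mathrm{SL}_2$, taking $V_1$ to be the standard module of the first factor and $V_2$ that of the second, the tensor product $V_1\otimes V_2$ is irreducible). What the paper's route buys: brevity, and the cited results are far stronger (Rajan's theorem gives unique factorization of tensor products into irreducibles; the PRV circle of ideas locates explicit components of $V_1\otimes V_2$), though they are heavy machinery for this special case.
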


Lemma \ref{lem2} is a very special case of the PRV conjecture, \cite{PRV},
which is now proved. We also note that Lemma \ref{lem2} is an
immediate consequence of \cite[p. 683, Theorem 1]{Ra}.

\begin{proposition}\label{prop2}
Let $(E\, ,\gamma)$ be an equivariant vector bundle of rank $r$ on $\overline G$
such that the action of $\widetilde{G}$ on $E_{e_0}$ is irreducible. Then either
$E$ is stable or there is a holomorphic line bundle $\xi$ on $\overline G$ such that
$E$ is holomorphically isomorphic to $\xi^{\oplus r}$.
\end{proposition}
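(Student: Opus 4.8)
The plan is to start from Lemma \ref{lem1}, which already guarantees that $E$ is polystable, and then to rule out every possibility except the two in the statement by studying the fiber $E_{e_0}$ as a $\widetilde G$--module.

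First I would write the polystable bundle $E$ as the direct sum of its isotypical components, $E\,\cong\,\bigoplus_j V_j^{\oplus m_j}$, where the $V_j$ are mutually non-isomorphic stable bundles, all of slope $\mu(E)$. Each isotypical component is intrinsically defined, so by the connectivity of $\widetilde G\times\widetilde G$ (exactly as in the proof of Lemma \ref{lem1}, where the Harder--Narasimhan filtration and the maximal polystable subsheaf were shown to be preserved) the action $\gamma$ of $\widetilde G\times\widetilde G$ preserves each of them. Hence each $(V_j^{\oplus m_j})_{e_0}$ is a $\widetilde G$--submodule of $E_{e_0}$, and it is nonzero by the same equivariance argument used in Lemma \ref{lem1} (a component vanishing at $e_0$ would vanish on all of $G$, hence be zero). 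Irreducibility of $E_{e_0}$ then forces a single isotypical type, so $E\,\cong\, V^{\oplus m}$ for one stable bundle $V$, with $\text{rank}(V)\cdot m\,=\, r$.

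Next I would promote $V$ to an equivariant bundle. Since each $\beta(g)$ is an automorphism fixing the class $L$, the pullback $\beta(g)^*V$ is again stable of slope $\mu(V)$, and from $\beta(g)^*E\,\cong\, E$ together with the uniqueness of the stable summands of a polystable bundle we obtain $\beta(g)^*V\,\cong\, V$ for every $g\,\in\,\widetilde G\times\widetilde G$. Thus Proposition \ref{prop1} applies and equips $V$ with an equivariant structure. The multiplicity space $M\,:=\,\text{Hom}_{\mathcal O_{\overline G}}(V,E)\,\cong\,\mathbb C^{m}$ then carries a linear $\widetilde G\times\widetilde G$--action, and the evaluation map provides an equivariant isomorphism $V\otimes_{\mathbb C} M\,\cong\, E$. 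Restricting to the fiber over $e_0$ and to the isotropy subgroup $\text{Iso}_{e_0}$ yields an isomorphism of $\widetilde G$--modules $V_{e_0}\otimes M\,\cong\, E_{e_0}$.

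The decisive step is this last one, where Lemma \ref{lem2} does the work. Since $E_{e_0}$ is irreducible and equal to $V_{e_0}\otimes M$, both $V_{e_0}$ and $M$ must themselves be irreducible $\widetilde G$--modules. Lemma \ref{lem2} then forbids both factors from being nontrivial, so one of them is the trivial one-dimensional module (recall that $\widetilde G$ is semisimple, so every one-dimensional module is trivial). If $M$ is trivial then $m\,=\,1$ and $E\,\cong\, V$ is stable; if $V_{e_0}$ is trivial then $V$ is a line bundle $\xi$ and $E\,\cong\,\xi^{\oplus r}$. I expect the main obstacle to lie in the third paragraph: verifying that the evaluation isomorphism is genuinely equivariant for the structure produced by Proposition \ref{prop1} (so that the tensor factorization descends to $\widetilde G$--modules on the fiber), and confirming that the isotypical components are invariant under the genuine action $\gamma$ rather than merely under the abstract pullback isomorphisms $\beta(g)^*E\,\cong\, E$.
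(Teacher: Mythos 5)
Your proposal is correct and follows essentially the same route as the paper's own proof: polystable decomposition via Lemma \ref{lem1}, invariance of the isotypical components by connectivity of $\widetilde{G}\times\widetilde{G}$ forcing a single stable summand type $V$, Atiyah's Krull--Schmidt theorem plus Proposition \ref{prop1} to make $V$ equivariant, the equivariant evaluation isomorphism $V\otimes_{\mathbb C}\mathrm{Hom}(V,E)\cong E$, and finally Lemma \ref{lem2} applied to the fiber at $e_0$. The two worries you flag at the end (equivariance of the evaluation map, and invariance of the isotypical components under $\gamma$ itself) are handled in the paper exactly as you handle them, so they are not genuine gaps.
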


\begin{proof}
{}From Lemma \ref{lem1} we know that $E$ is polystable. Therefore, there are
distinct stable vector bundles $F_1\, ,\cdots\, , F_\ell$ and positive integers
$n_1\, ,\cdots\, , n_\ell$, such that $\mu(F_i)\,=\, \mu(E)$ for every $i$ and
\begin{equation}\label{e5}
E\,=\, \bigoplus_{i=1}^\ell F^{\oplus n_i}_i\, .
\end{equation}
We emphasize that $F_i\,\not=\, F_j$ if $i\,\not=\, j$. The vector bundles
$F_1\, ,\cdots\, ,F_\ell$ are uniquely determined by $E$ up to a permutation of
$\{1\, ,\cdots\, ,\ell\}$ \cite[p. 315, Theorem 2]{At}.

Fix a holomorphic isomorphism between the two vector bundles in the two sides of
\eqref{e5}. Take $F_i$ and $F_j$ with $i\, \not=\, j$. Since they
are nonisomorphic stable vector bundles of same slope, we have
$$
H^0(\overline{G},\, F_j\otimes F^\vee_i)\,=\, 0\,=\,
H^0(\overline{G},\, F_i\otimes F^\vee_j)\, .
$$
Consequently, for every $i\,\in\, \{1\, ,\cdots\, ,\ell\}$, there is a unique
subbundle of $E$ which is isomorphic to $F^{\oplus n_i}_i$. Using this it follows
that for any $g\, \in\, \widetilde{G}\times \widetilde{G}$, and any
$j\,\in\, \{1\, ,\cdots\, ,\ell\}$, there is a $k\,\in\, \{1\, ,\cdots\, ,\ell\}$
such that the action of $g$ on $E$ takes the subbundle $F^{\oplus n_j}_j$
to $F^{\oplus n_k}_k$. Since $\widetilde{G}
\times\widetilde{G}$ is connected, this implies that the action $\gamma$ of
$\widetilde{G}\times \widetilde{G}$ on $E$ preserves the subbundle $F^{\oplus
n_i}_i$ for every $i\,\in\, \{1\, ,\cdots\, ,\ell\}$. Now from the irreducibility
of the action of $\widetilde G$ on $E_{e_0}$ we conclude that $\ell\,=\,1$.

We will denote $F_1$ and $n_1$ by $F$ and $n$ respectively. So
\begin{equation}\label{e6}
F\,=\, F^{\oplus n}\, .
\end{equation}

Since for every $g\,\in\, \widetilde{G}\times \widetilde{G}$, the pulled back
holomorphic vector bundle $\beta(g)^*(F^{\oplus n})$ is holomorphically isomorphic
to $F^{\oplus n}$, using \cite[p. 315, Theorem 2]{At} and the fact that
$F$ is indecomposable (recall that $F$ is stable), we conclude that
the pulled back holomorphic vector bundle $\beta(g)^*F$ is holomorphically isomorphic
to $F$ for every $g\,\in\, \widetilde{G}\times \widetilde{G}$. Therefore, from
Proposition \ref{prop1} we know that there is an action $\delta$ of $\widetilde{G}
\times\widetilde{G}$ on $F$ such that $(F\, ,\delta)$ is an equivariant vector bundle
on $\overline G$.

The actions $\gamma$ and $\delta$ together define an action of $\widetilde{G}\times
\widetilde{G}$ on the vector bundle $$Hom(F\, ,E)\,=\, E\otimes F^\vee\, .$$ This
action of $\widetilde{G}\times\widetilde{G}$ on $Hom(F\, ,E)$ produces an
action of $\widetilde{G}\times\widetilde{G}$ on the vector space
$H^0(\overline{G},\, Hom(F\, ,E))$.

In view of \eqref{e6}, we have a canonical isomorphism
\begin{equation}\label{e7}
E\,=\, F\otimes_{\mathbb C} H^0(\overline{G},\, Hom(F\, ,E))\, .
\end{equation}
This isomorphism sends any $(v\, , \sigma)\, \in\, (F_x\, ,
H^0(\overline{G},\, Hom(F\, ,E)))$ to the evaluation $\sigma_x(v)\,\in\, E_x$.
The isomorphism in \eqref{e7} is $\widetilde{G}\times \widetilde{G}$--equivariant.
Since the action of $\widetilde{G}$ on $E_{e_0}$ is irreducible, from
Lemma \ref{lem2} we conclude that either $\text{rank}(F)\,=\, 1$ or
$$
\dim H^0(\overline{G},\, Hom(F\, ,E))\,=\, 1\, .
$$

If $\dim H^0(\overline{G},\, Hom(F\, ,E))\,=\, 1$, then from \eqref{e7}
and the fact that $F$ is stable it follows immediately that $E$ is stable.
If $\text{rank}(F)\,=\, 1$, then from \eqref{e7} it follows that
$$
E\,=\, F^{\oplus r}\, ,
$$
where $r\,=\, \text{rank}(E)$.
\end{proof}

\begin{remark}\label{rem1}
Let $V$ be any irreducible $G$--module. Consider the trivial right action of
$\widetilde G$ on $V$ as well as the left action of $\widetilde G$ on $V$ given by the
combination of the action of $G$ on $V$ and the projection in \eqref{e0}. Therefore,
we get the diagonal action of ${\widetilde G}\times\widetilde G$ on the trivial
vector bundle $\overline{G}\times V$ over $\overline{G}$. Consequently, the trivial
vector bundle $\overline{G}\times V$ gets the structure on an equivariant vector bundle.
We note that the action of $\widetilde{G}\, \subset\, \text{Iso}_{e_0}$
on the fiber of $\overline{G}\times V$ over the point $e_0$ is irreducible because the
$G$--module $V$ is irreducible.
\end{remark}

\section{The tangent bundle}

\begin{theorem}\label{thm1}
Let $L\, \in\, {\rm NS}(\overline{G})$ be any ample class.
The tangent bundle of $\overline G$ is stable with respect to $L$.
\end{theorem}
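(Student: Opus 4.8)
The plan is to realize $T\overline{G}$ as an equivariant vector bundle with irreducible fiber action and then invoke Proposition \ref{prop2}, so that the whole problem collapses to excluding the alternative $T\overline{G}\,\cong\,\xi^{\oplus d}$. First I would equip the tangent bundle with its tautological equivariant structure: each automorphism $\beta(g)$ with $g\,\in\,\widetilde{G}\times\widetilde{G}$ has a differential $d\beta(g)$, and these differentials assemble into a lift $\gamma$ of the action to $T\overline{G}$, so that $(T\overline{G}\, ,\gamma)$ is equivariant in the sense of Definition \ref{def1}. (Equivalently, $\beta(g)^*T\overline{G}\,\cong\, T\overline{G}$ for all $g$, so Proposition \ref{prop1} applies.) Next I would compute the fiber at $e_0$. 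Since $e_0$ lies in the open orbit $G$, one has $(T\overline{G})_{e_0}\,=\, T_{e_0}G\,=\,{\mathfrak g}$, and the connected isotropy $\widetilde{G}$, which is the diagonal $\{(g\, ,g)\}$ acting by $y\,\longmapsto\,\pi(g)y\pi(g)^{-1}$, acts on ${\mathfrak g}$ through the adjoint representation $\mathrm{Ad}\circ\pi$. As $G$ is simple, ${\mathfrak g}$ is a simple Lie algebra, so the adjoint representation is irreducible and nontrivial. Proposition \ref{prop2} then gives that either $T\overline{G}$ is $L$-stable or $T\overline{G}\,\cong\,\xi^{\oplus d}$ for some holomorphic line bundle $\xi$, with $d\,=\,\dim_{\mathbb C}G\,=\,\mathrm{rank}(T\overline{G})$.

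It remains to rule out $T\overline{G}\,\cong\,\xi^{\oplus d}$, and this is the crux of the argument. The approach I favour is to restrict to a rational curve. The variety $\overline{G}$ contains the rational variety $G$ as a dense open subset, hence $\overline{G}$ is rational and in particular uniruled, so by Mori's bend-and-break (the bound on minimal rational curves) there is a rational curve $C\,\subset\,\overline{G}$ with $-K_{\overline{G}}\cdot C\,\leq\,\dim_{\mathbb C}\overline{G}+1\,=\,d+1$. Let $\nu\,:\,{\mathbb P}^1\,\longrightarrow\, C$ be the normalization; then $\nu^*T\overline{G}\,=\,\bigoplus_i{\mathcal O}_{{\mathbb P}^1}(a_i)$, and the nonzero differential $d\nu\,:\, T{\mathbb P}^1\,=\,{\mathcal O}_{{\mathbb P}^1}(2)\,\longrightarrow\,\nu^*T\overline{G}$ forces $\max_i a_i\,\geq\, 2$. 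If $T\overline{G}\,\cong\,\xi^{\oplus d}$, then $\nu^*T\overline{G}\,\cong\,{\mathcal O}_{{\mathbb P}^1}(b)^{\oplus d}$ with $b\,=\,\deg\nu^*\xi$, so all the $a_i$ are equal to $b\,\geq\, 2$. Consequently $-K_{\overline{G}}\cdot C\,=\,\sum_i a_i\,=\,db\,\geq\, 2d$, contradicting $-K_{\overline{G}}\cdot C\,\leq\, d+1$ since $d\,\geq\, 3$. This excludes the second alternative, so $T\overline{G}$ is $L$-stable; as the whole argument is uniform in the ample class, stability holds for every polarization.

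An alternative, purely algebraic exclusion would compare determinants. Since $G$ is a group, $TG$ is trivial, so $T\overline{G}|_G\,\cong\,{\mathcal O}_G^{\oplus d}$; if $T\overline{G}\,\cong\,\xi^{\oplus d}$ then $\xi|_G$ is trivial and $\xi^{\otimes d}\,=\,\det T\overline{G}\,=\,-K_{\overline{G}}$. Writing $-K_{\overline{G}}$ in the basis of $\mathrm{Pic}(\overline{G})$ given by the fundamental-weight line bundles, the equality $\xi^{\otimes d}\,=\,-K_{\overline{G}}$ would force $d$ to divide every coefficient, which I would contradict from the explicit anticanonical formula (for instance, for $\overline{PGL_2}\,=\,{\mathbb P}^3$ one has $-K\,=\,{\mathcal O}(4)$ while $d\,=\,3$). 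The main obstacle in either route is precisely this last step, namely establishing that $T\overline{G}$ is genuinely non-split, since the representation-theoretic input only yields the dichotomy of Proposition \ref{prop2}; the rational-curve argument is the more robust way to settle it, as it needs nothing about $\overline{G}$ beyond uniruledness and the elementary splitting behaviour of $T\overline{G}$ along a curve.
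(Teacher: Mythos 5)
Your proposal is correct, and its first half coincides with the paper's argument: you equip $T\overline{G}$ with the equivariant structure coming from the differentials $d\beta(g)$, identify the fiber at $e_0$ with $\mathfrak g$ carrying the adjoint action of the diagonal $\widetilde G$, invoke irreducibility (simplicity of $\mathfrak g$), and reduce via Proposition \ref{prop2} to excluding $T\overline{G}\cong\xi^{\oplus d}$. Where you genuinely diverge is in that exclusion step. The paper argues: $G$ is unirational, hence $\overline G$ is unirational, hence simply connected by Serre; then, since $T\overline{G}$ would split into line bundles, the Brunella--Pereira--Touzet theorem on compact K\"ahler manifolds with split tangent bundle forces $\overline G\cong({\mathbb C}{\mathbb P}^1)^d$, whose tangent bundle is visibly not of the form $\xi^{\oplus d}$. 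You instead use Mori's theory: $\overline G$ is rational, hence uniruled, so it carries a rational curve $f:{\mathbb P}^1\to\overline G$ with $-K_{\overline G}\cdot C\le d+1$; the nonvanishing of $df$ forces a summand of degree $\ge 2$ in the Grothendieck splitting of $f^*T\overline{G}$, and under the hypothesis $T\overline{G}\cong\xi^{\oplus d}$ all $d$ summands would have that same degree, giving $-K_{\overline G}\cdot C\ge 2d>d+1$, a contradiction (note your argument does not even require $f$ to be birational onto its image, since $df\neq 0$ for any non-constant map in characteristic zero). Both routes rest on one deep external theorem (BPT versus bend-and-break), but yours buys something slightly stronger and more robust: it shows that \emph{no} smooth projective uniruled variety of dimension at least $2$ can have tangent bundle isomorphic to $\xi^{\oplus d}$, without needing simple connectedness or any K\"ahler foliation theory, whereas the paper's route passes through an actual classification of $\overline G$ under the absurd hypothesis. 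Your secondary, determinant-based suggestion is only a sketch (the divisibility claim for $-K_{\overline G}$ in a basis of $\mathrm{Pic}(\overline G)$ is verified only for $PGL_2$), but since you present it merely as an alternative, it does not affect the completeness of the main argument.
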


\begin{proof}
Since $G$ is simple, the adjoint action of $\widetilde G$ on the Lie algebra
$\mathfrak g$ of $G$ is irreducible. In view of Proposition \ref{prop2}, it suffices
to show that the tangent bundle $T\overline{G}$ is not of the form
$\xi^{\oplus d}$, where $\xi$ is a holomorphic line bundle on $\overline{G}$
and $d\,=\, \dim_{\mathbb C} G$.

Assume that
\begin{equation}\label{e8}
T\overline{G}\,=\, \xi^{\oplus d}\, ,
\end{equation}
where $\xi$ is a holomorphic line bundle on $\overline{G}$.

Since the variety $G$ is unirational (cf. \cite[p. 4, Theorem 3.1]{Gi}), the
compactification $\overline{G}$ is also unirational. Hence $\overline{G}$ is simply
connected \cite[p. 483, Proposition 1]{Se}. As $T\overline{G}$ holomorphically splits
into a direct sum of line bundles (see \eqref{e8}) and $\overline{G}$ is simply
connected, it follows that
$$
\overline{G}\,=\, ({\mathbb C}{\mathbb P}^1)^d
$$
\cite[p. 242, Theorem 1.2]{BPT}. But the tangent bundle of $({\mathbb C}{\mathbb
P}^1)^d$ is not of the form $\xi^{\oplus d}$ (see \eqref{e8}); although the tangent
bundle of $({\mathbb C}{\mathbb P}^1)^d$ is a direct sum of line bundles, the line
bundles in its decomposition are not isomorphic. Therefore, $T\overline{G}$ is not
of the form $\xi^{\oplus d}$. This completes the proof.
\end{proof}

\section*{Acknowledgements}
We thank the referee and D. S. Nagaraj for some comments.
This work was carried out when both the authors were visiting KSOM, Calicut. We
thank KSOM for hospitality. The first-named author acknowledges the support of the
J. C. Bose Fellowship.

%%%%%%%%%%%%%%%%%%%%%%%%%%%%%%%%%%%%%%%%%%%%%%%%%%%

\end{document}